\newtheorem{theor}{Theorem}[section]
\newtheorem{examp}{Example}[section]
\newtheorem{lem}{Lemma}[section]
\newtheorem{prop}{Proposition}[section]
\newtheorem{cor}{Corollary}[section]
\newtheorem{quest}{Question}
\newtheorem{rem}{Remark}
\def\car{\mathop{\rm char}\nolimits}
\definecolor{lime}{HTML}{A6CE39}
\DeclareRobustCommand{\orcidicon}{%
	\begin{tikzpicture}
	\draw[lime, fill=lime] (0,0)
	circle [radius=0.16]
	node[white] {{\fontfamily{qag}\selectfont \tiny ID}};
	\draw[white, fill=white] (-0.0625,0.095)
	circle [radius=0.007];
	\end{tikzpicture}
	\hspace{-2mm}
}
\xdef\csname orcid\x\endcsname{\noexpand\href{https://orcid.org/\csname orcidauthor\x\endcsname}{\noexpand\orcidicon}}
\title[Duo, Reversible and Symmetric GR's]%
      {On Duo, Reversible and Symmetric Group Rings}
\author[B. S Fl\'orez-Burbano]{Brayan S. Fl\'orez-Burbano\orcidC{}}
\address{Brayan S. Fl\'orez-Burbano, Escuela de Matem\'aticas, Universidad Industrial de Santander}
\email{brayan2218071@correo.uis.edu.co}
\author[A. Holgu\'in-Villa]{Alexander Holgu\'in-Villa\orcidA{}}
\address{Alexander Holgu\'in-Villa, Escuela de Matem\'aticas, Universidad Industrial de Santander}
\email{aholguin@uis.edu.co}
\author[J. H. Castillo]{John H. Castillo\orcidB{}}
\address{John H. Castillo, Departamento de Matem\'aticas y Estad\'istica, Universidad de Nari\~no}
\email{jhcastillo@udenar.edu.co}
\keywords{Group rings; Duo rings; Reversible rings; Symmetric rings.}
\subjclass[2010]{16S34, 16U80.}
\begin{document}
\maketitle
 \noindent
  \renewcommand{\refname}{References}


\begin{abstract}
Let $RG$ denote the group ring of the torsion group $G$ over a commutative ring $R$ with identity. In this paper
we present proofs of some statements that appear without to be proved in the literature. We establish the valid
implications between the ring-theoretic conditions duo, reversible, SI property and symmetric in the setting of
group rings. We further show that if the group ring $RG$ possesses any of these properties, then $G$ is
a Hamiltonian group and the characteristic of $R$ is either $0$ or $2$. Moreover, we characterize the same properties
in group rings $RG$ in the following cases: ($1)$ $RG$ is a semi-simple group ring and ($2$) $R$ is a semi-simple
ring and $G$ any group.

\end{abstract}

\section{Introduction}

Throughout this manuscript, $R$ always will denote an associative ring, which does not necessarily contains identity unless otherwise noted. A ring $R$ is called left (resp. right) duo if every left (resp. right) ideal is an ideal, and $R$ is called duo if it is both
left and right duo. The ring $R$ is reversible if for all $a, b\in R$, $ab=0$ implies $ba=0$. Finally, $R$ is called
symmetric if any $a, b, c\in R$, $abc=0$ implies $acb=0$.

It is known that in every commutative ring, the set of nilpotent elements forms an ideal that coincides with the intersection
of all prime ideals. This result also holds for certain non-commutative rings, which are the so-called {\it $2$-primal rings}.
Marks in \cite{Marks:02} presented the following table that summarizes the valid implications between some ring-theoretic
properties that give rise to the $2$-primal rings, where the rings do not necessarily contain identity:

\begin{figure}[htb]
\begin{align*}
      \begin{matrix}
      \textup{ reduced }   & \Rightarrow & \textup{ symmetric }& & \textup{ all prime ideals are completely prime } \\
      \Downarrow           &             & \Downarrow        &            & \Downarrow\\
      \textup{ reversible} & \Rightarrow & \textup{SI}    & \Rightarrow & \textup{ $2$-primal }
      \end{matrix}
\end{align*}
\caption{Relations between six ring-theoretic conditions. }
\label{f1}
\end{figure}

Reversible rings, as a natural common generalization of commutative rings and integral domains, were studied in \cite{Cohn:99} by P. M. Cohn.
In particular, he observed that the Köthe conjecture holds for the class of these rings.
The relationship between symmetric and reversible rings were discussed in \cite{Marks:02}. Moreover, he proved that the group
algebra $\mathbf{F}_2Q_8$ of the quaternions $Q_8$ over the field $\mathbf{F}_2$ is reversible, but not symmetric. In \cite{GuKis:04}
M. Gutan and A. Kisielewicz characterized reversible group algebras $\mathbf{F}G$ of torsion groups $G$ over fields $\mathbf{F}$. In particular,
they described all finite reversible group algebras which are not symmetric, extending a result of G. Marks. Y. Li and M. M. Parmenter investigated
in \cite{LiPar:07} reversible group rings $RG$ of torsion groups $G$ over commutative rings $R$, which gave continuity to the results of Gutan
and Kisielewicz. For group algebras $\mathbf{F}G$ of torsion groups $G$ over a field $\mathbf{F}$, H. E. Bell and Y. Li in \cite{BeLi:07} showed
that the duo and reversible properties are equivalent. Finally, W. Gao and Y. Li in \cite{GaoLi:11} described when the group ring $RG$ of a
torsion group $G$ over an integral domain $R$ is duo.

In this paper, we present the proof of some results that are asserted (without proof) in the mentioned articles and we also establish a
modification of Figure \ref{f1}. Moreover, for group rings  $RG$ of a non-abelian torsion group $G$ over a commutative ring $ R$
with identity, we design diagrams similar to Figure \ref{f1} (see Figure \ref{f2} and Figure \ref{f3}), in which, in addition to determining the precise relationships among the duo, reversible and symmetric properties, we give necessary conditions, in terms
of the group $G$ and the ring $R$, when $RG$ has these properties. Finally, we close this article by presenting some results, Theorem \ref{t6},
Corollary \ref{c1}, Theorem \ref{t7}, Theorem \ref{t8} and Theorem \ref{t9}, that emerged after trying to answer Question \ref{q1}, raised by Gao and Li in \cite{GaoLi:11}.

\section{Preliminaries and notations}

Let $R$ be a ring. We use $\eta(R)$, $\eta_\ast(R)$ and $\eta^\ast(R)$ to denote the set of all nilpotent elements, the lower nil-radical of $R$,
i.e., the intersection of all prime ideals of $R$, and the upper nil-radical of $R$, i.e., the sum of all nil ideals of $R$, respectively. It is
well-known that $\eta_{\ast}(R)\subset \eta^\ast(R)\subset \eta(R)$. \\

A ring-theoretic property between ``commutative'' and ``symmetric'' is as follows: a ring $R$ is called reduced if it contain no non-zero
nilpotent elements. Another property which generalize commutativity that implies the $2$-primal condition is known as the SI condition, defined
by for all $a, b\in R$, $ab=0$ implies $aRb=\left\{0\right\}$.\\

The following examples, in addition to illustrating the ring-theoretic properties of interest, will be useful in the last section.

\begin{examp}\label{ex1}
Let $D$ be a division ring.
\begin{enumerate}
 \item Since the only right and left ideals of a division ring $D$ are the trivial two-sided ideals, then $D$ is right and left duo
       ring. Therefore, $D$ is a duo ring. Moreover, in a division ring $D$ there are no zero divisors, thus $D$ is reversible,
       symmetric, and has the SI property.

 \item Let $R=M_n(D)$ the full ring of $n \times n$ matrices over $D$. If $n \geq 2$, we have
      \begin{align*}
       I=\left \{
      \begin{bmatrix}
       x_1    &    x_2    & \ldots & x_n \\
       0    &    0    & \ldots & 0 \\
       \vdots & \vdots  & \ddots & \vdots  \\
       0    &    0    & \cdots & 0
       \end{bmatrix}: \hspace{0.1cm} x_i \in  D \right \},
 \end{align*}
is a right ideal of $R$ but it is not a left ideal of $R$ and thus,  the ring $R$ is not a right duo. Similarly, we can prove that
$R$ is not a left duo either. Moreover, if $n \geq 2$ we also can show that $R$ does not have neither reversible, symmetric
nor SI properties. Therefore, $R$ is duo, reversible, symmetric, and has the SI property if and only if $n=1$.
\end{enumerate}
\end{examp}

In what follows, we present the proof, for the convenience of the readers, of some results that appear in the Figure \ref{f1}.

\begin{lem} \label{l2}
If $R$ is a reduced ring, then $R$ is reversible and symmetric.
\begin{proof}
We will prove that reduced implies symmetric. The arguments to establish that reduced implies reversible are similar.

Let $a, b, c\in R$ such that $abc=0$, then $(cab)^{2}=c(abc)ab=0$. Since $R$ is reduced, we have $cab=0$ and thus, $(bca)^{2}=b(cab)ca=0$
and again by hypothesis $bca=0$. Similarly, $cacb=0$ and $acbac=0$ since $(cacb)^{2}=cac(bca)cb=0$ and $(acbac)^{2}=acba(cacb)ac=0$.
Therefore, $(bac)^{2}=b(acbac)=0$ and thus, $bac=0$. Finally, $(acb)^{2}=ac(bac)b=0$ and by hypothesis $acb=0$. This finishes the proof.

The proof of the other statement is similar.
\end{proof}
\end{lem}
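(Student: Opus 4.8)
The plan is to prove the two assertions independently, each time exploiting the one fact available about reduced rings: if $x \in R$ and $x^{2} = 0$, then $x = 0$. For reversibility I would argue directly: given $a, b \in R$ with $ab = 0$, I compute $(ba)^{2} = b(ab)a = 0$, so $ba$ is a square-zero element and hence $ba = 0$. That disposes of the reversible case in two lines, so the substance of the lemma lies in the symmetric case.

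For symmetry I would fix $a, b, c \in R$ with $abc = 0$ and aim to reach $acb = 0$ by a finite chain of square-zero deductions: at each step I exhibit, inside the square of some word in $a, b, c$, an already-established vanishing word as a consecutive factor, so that reducedness collapses the new word to $0$; the chain is engineered to terminate at $acb$. Explicitly, $(cab)^{2} = c\,(abc)\,ab = 0$ gives $cab = 0$; then $(bca)^{2} = b\,(cab)\,ca = 0$ gives $bca = 0$; next $(cacb)^{2} = cac\,(bca)\,cb = 0$ gives $cacb = 0$; then $(acbac)^{2} = acba\,(cacb)\,ac = 0$ gives $acbac = 0$; then $(bac)^{2} = b\,(acbac) = 0$ gives $bac = 0$; and finally $(acb)^{2} = ac\,(bac)\,b = 0$ gives $acb = 0$.

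The only real difficulty here is combinatorial: one must find the right intermediate words so that every square factors through a word already known to vanish, and in particular the order in which the cyclic rearrangements $cab, bca$ and the longer auxiliary words $cacb, acbac$ are introduced matters. A partial shortcut is to establish reducedness $\Rightarrow$ reversibility first and then observe that in a reversible ring a zero product is stable under cyclic permutation of its factors (from $xyz = 0$ one gets $(yz)x = 0$), which makes $bca = 0$ and $cab = 0$ immediate and also lets one pass from $bac = 0$ to $acb = 0$ for free; but since $acb$ lies in the cyclic class of $cba$ rather than in that of $abc$, one still needs the word-length bookkeeping to cross between the two classes, so the puzzle is not entirely avoided. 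Once the chain is in hand, each step is the same elementary argument, and the reversible statement follows by the identical square-zero trick, so nothing further is required.
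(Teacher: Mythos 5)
Your proposal is correct and follows exactly the same chain of square-zero deductions ($cab$, $bca$, $cacb$, $acbac$, $bac$, $acb$) as the paper's own proof, with the added benefit of spelling out the reversibility case ($(ba)^{2}=b(ab)a=0$) that the paper leaves as ``similar.'' Nothing further is needed.
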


\begin{lem} \label{l3}	
If $R$ is either reversible or symmetric, then $R$ has the SI property.
\begin{proof}
Suppose that $R$ is reversible and let $a, b \in R$ such that $ab=0$. Then, by hypothesis  $ba=0$ and for all $r \in R$ we have
$$
0=0r=(ba)r=b(ar)=(ar)b.
$$
Therefore, $aRb=\lbrace 0 \rbrace$ and $R$ has the SI
property.

If $R$ is symmetric, the proof that it has the SI property follows in a similar way.
\end{proof}
\end{lem}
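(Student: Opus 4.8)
The plan is to treat the two hypotheses separately. In each case I fix an arbitrary pair $a, b \in R$ with $ab = 0$ together with an arbitrary $r \in R$, and show that $arb = 0$; since $r$ is arbitrary this gives $aRb = \{0\}$, which is precisely the SI condition. The single technical point to keep in mind is that $R$ is not assumed to contain an identity, so one cannot write $ab = a\cdot 1 \cdot b$ in order to manufacture a three-term product; instead the three-term products are produced by multiplying the given relation $ab = 0$ on one side by $r$.

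For the reversible case: from $ab = 0$, reversibility gives $ba = 0$, hence for every $r \in R$ we have $b(ar) = (ba)r = 0$. Applying reversibility once more, now to the pair $ar$ and $b$, yields $(ar)b = 0$, that is $arb = 0$. As $r$ was arbitrary, $aRb = \{0\}$ and $R$ has the SI property.

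For the symmetric case: from $ab = 0$ and any $r \in R$ we obtain $abr = (ab)r = 0$, which we read as the vanishing of the three-term product whose factors, in order, are $a$, $b$, $r$. The symmetric property ($xyz = 0 \Rightarrow xzy = 0$) then gives $arb = 0$, and again $aRb = \{0\}$ follows. Hence in either case $R$ has the SI property.

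I do not anticipate a real obstacle here: the essential observation is just that the SI condition is exactly what results from "inserting an arbitrary $r$" into a zero product $ab = 0$, and that reversibility and symmetry are each precisely strong enough to license that insertion. The only place that requires care is resisting the temptation to use an identity element, which is why the argument multiplies the relation $ab=0$ by $r$ rather than rewriting $ab$ as a three-term product directly.
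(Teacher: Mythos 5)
Your proof is correct and takes essentially the same route as the paper: for the reversible case the paper likewise passes from $ab=0$ to $ba=0$, then to $b(ar)=(ba)r=0$, and applies reversibility once more to get $(ar)b=0$ (you state this second application explicitly, where the paper compresses it into a chain of equalities). For the symmetric case the paper only says the argument is similar, and your direct derivation $abr=0\Rightarrow arb=0$ is exactly the intended one, with the correct care taken to avoid invoking an identity element.
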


Note that in a ring $R$ which has the SI property, the ideal $RaR$ is nilpotent whenever $a  \in  \eta(R)$, and hence any nilpotent element $a$
is contained in $\eta_{\ast}(R)$. Thus, $\eta_{\ast}(R)=\eta(R)$ and then $R$ is a $2$-primal ring. Moreover, by {{\cite[Proposition 1.11]{Shin:73}}}
a necessary and sufficient condition for a ring $R$ to be $2$-primal is that every minimal prime ideal $\mathfrak{p}$ of  $R$ be completely prime
(i.e. $R/\mathfrak{p}$ be a domain). The above together with Lemma \ref{l2} and Lemma \ref{l3} justify the implications presented in Figure
\ref{f1}. We include the following statement in this diagram.\\

Recall that, the subsets $Ann_l(a)=\left\{r\in R: ra=0\right\}$ y $Ann_r(a)=\left\{r\in R: ar=0\right\}$, denote the left  and right
annihilator of $a$ in $R$, respectively.

\begin{theor}\label{theor1}
If the ring $R$ is right duo or left duo, then $R$ has the SI property. In particular, if $R$ is a duo ring, then $R$ has the SI property.
\end{theor}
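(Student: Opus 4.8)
The plan is to argue directly from the definition of the SI property, exploiting the fact that in a right duo ring every principal right ideal is two-sided. So suppose $R$ is right duo, fix $a,b\in R$ with $ab=0$, and let us show $arb=0$ for every $r\in R$. First I would recall that, since $R$ is not assumed unital, the right ideal generated by $b$ is $\langle b\rangle = bR+\mathbb{Z}b$, and that the right duo hypothesis forces this set to be a two-sided ideal of $R$. Hence for an arbitrary $r\in R$ we have $rb\in\langle b\rangle$, so $rb=bs+nb$ for some $s\in R$ and $n\in\mathbb{Z}$. Multiplying on the left by $a$ and using $ab=0$ gives
$$
arb=a(bs)+a(nb)=(ab)s+n(ab)=0 .
$$
As $r$ was arbitrary, $aRb=\{0\}$, which is precisely the SI condition.

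For the left duo case I would run the mirror-image argument: now every left ideal is two-sided, so with $ab=0$ the principal left ideal $\langle a\rangle = Ra+\mathbb{Z}a$ is an ideal, whence $ar=ta+na$ for some $t\in R$, $n\in\mathbb{Z}$, and therefore $arb=(ta)b+n(ab)=t(ab)+n(ab)=0$. Again $aRb=\{0\}$. The final assertion is then immediate: a duo ring is in particular right (equivalently, left) duo, so it has the SI property.

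I do not anticipate any real obstacle here; the one point needing a little care is the lack of an identity, which introduces the extra integer-multiple summand $\mathbb{Z}b$ (resp. $\mathbb{Z}a$) in the description of a principal one-sided ideal. That is the only place where a careless write-up could look incomplete, but it causes no trouble, since the extra term $n(ab)$ is again annihilated by the hypothesis $ab=0$. If one wishes to streamline the exposition, one can first dispatch the unital case, where $\langle b\rangle=bR$ and $\langle a\rangle=Ra$, and then remark that adjoining the integer multiples changes nothing in the computation above.
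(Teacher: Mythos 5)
Your proof is correct, and it is essentially the paper's argument: both exploit the fact that the right duo hypothesis turns a right ideal containing $b$ into a two-sided ideal, so that $rb$ stays inside it and is killed by left multiplication by $a$. The only difference is the choice of ideal: the paper uses the right annihilator $Ann_r(a)$, which contains $b$, is a right ideal, hence two-sided, hence contains $rb$, giving $arb=0$ with no further computation; you use the principal right ideal $\langle b\rangle = bR+\mathbb{Z}b$ and then expand $rb=bs+nb$ explicitly. The annihilator route sidesteps the non-unital bookkeeping you were careful about, since one never needs to write down generators of the ideal; otherwise the two arguments coincide, including the mirror-image treatment of the left duo case and the immediate deduction for duo rings.
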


\begin{proof}
Suppose that $R$ is a right duo ring and consider $a, b \in R$ such that $ab=0$. Then, $b \in Ann_r(a)$ and since $Ann_r(a)$ is a right ideal
of $R$, it is  also a left ideal of $R$. Let $r \in R$. Then, $rb \in Ann_r(a)$  and thus,  $arb=0$. Therefore, $aRb = \lbrace 0 \rbrace$ and $R$ has
the SI property. When $R$ is a left duo ring, the proof is similar. By definition of a duo ring, the last statement is now clear.
\end{proof}

Moreover, since in a one-sided duo ring $R$ every prime ideal is completely prime, we present the following modification of
Figure \ref{f1}.

\begin{figure}[htb]
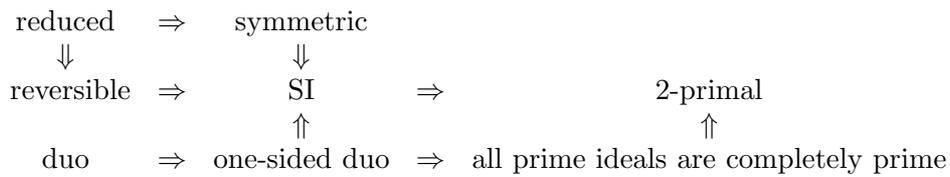

\begin{align*}
      \begin{matrix}
      \textup{ reduced }   & \Rightarrow & \textup{ symmetric }& &  \\
      \Downarrow           &             &    \Downarrow     &            & \\
      \textup{ reversible} & \Rightarrow & \textup{SI}    & \Rightarrow & \textup{ $2$-primal } \\
      &  & \Uparrow   &  & \Uparrow \\
      \textup{duo} &  \Rightarrow & \textup{one-sided duo}    & \Rightarrow & \textup{all prime ideals are completely prime}
      \end{matrix}
\end{align*}
\caption{Modified Marks' Diagram.}
\label{f4}
\end{figure}

We remark that Figure \ref{f4} allows us to prove that a duo ring is $2$-primal without setting that all its prime ideals
are completely prime.\\

Let $RG$ denote the group ring of a group $G$ over a commutative ring $R$ with identity. Using the so-called \emph{classical involution},
which is induced from the map $g\mapsto g^\ast=g^{-1}$, for all $g\in G$, given by $\Big(\sum_{g\in G} \alpha_gg\Big)^{\ast}=\sum_{g\in G} \alpha_gg^{-1}$,
Bell and Li \cite{BeLi:07} asserted, without proving, that the properties left and right duo are equivalent on $RG$. However,
for completeness’ sake, we now include easy proof of this fact.

\begin{prop}\label{prop1}
Let $R$ be a commutative ring with identity and $G$ be a group. The group ring $RG$ is left duo if and only if it is right duo.
\end{prop}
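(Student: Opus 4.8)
The plan is to exploit the classical involution $\ast$ on $RG$, which is an anti-automorphism of order two, to transfer left-sided statements to right-sided ones. First I would record the basic functorial facts: for any subset $S\subseteq RG$ we have $(S^\ast)^\ast=S$, and $\ast$ reverses products, so $(xy)^\ast=y^\ast x^\ast$ for all $x,y\in RG$. Consequently, if $L$ is a left ideal of $RG$, then $L^\ast$ is a right ideal, and conversely; and $L$ is a two-sided ideal if and only if $L^\ast$ is. These are routine verifications using $R$ commutative (so that the scalars behave well under $\ast$) and the fact that $\ast$ fixes $R$ pointwise and permutes $G$.

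Next I would run the equivalence. Assume $RG$ is left duo and let $I$ be a right ideal of $RG$; the goal is to show $I$ is two-sided. Then $I^\ast$ is a left ideal, hence by hypothesis a two-sided ideal, and applying $\ast$ again, $I=(I^\ast)^\ast$ is a two-sided ideal. Therefore every right ideal is two-sided, i.e.\ $RG$ is right duo. The reverse implication is obtained by interchanging the words \emph{left} and \emph{right} throughout, since $\ast$ is its own inverse and symmetric in this respect.

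The main obstacle — really the only point that needs any care — is verifying that $\ast$ genuinely is a $\mathbb{Z}$-linear (indeed $R$-linear, using commutativity of $R$) anti-automorphism of $RG$, in particular that it is well defined and multiplicative-reversing on arbitrary elements $\sum\alpha_g g$, not just on group elements. This is where the hypothesis that $R$ is commutative is used: one checks on basis elements that $(g\cdot h)^\ast=(gh)^{-1}=h^{-1}g^{-1}=h^\ast g^\ast$ and extends bilinearly, with the scalars $\alpha_g\alpha_h$ commuting so that they reassemble correctly after the reversal. Once this structural fact is in hand, the rest is the formal two-line argument above, and no group-theoretic input about $G$ is needed.
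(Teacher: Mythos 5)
Your proposal is correct and follows essentially the same route as the paper: both use the classical involution $\ast$ to send a right ideal $I$ to the left ideal $I^\ast$, invoke the left duo hypothesis to make $I^\ast$ two-sided, and then apply $\ast$ again (using $(xy)^\ast=y^\ast x^\ast$ and $I^{\ast\ast}=I$) to conclude $I$ is two-sided. The paper merely spells out the final step by checking $\gamma\rho=(\rho^\ast\gamma^\ast)^\ast\in I$ explicitly, where you package it as ``$L$ is two-sided iff $L^\ast$ is''; the content is identical.
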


\begin{proof}
Suppose that the group ring $RG$ is left duo and let $I$ be a right ideal of $RG$. Then, the set
$J= \left \{ \alpha^{*}:\hspace{0.1cm} \alpha \in I \right \} \subseteq RG$ is a left ideal of $RG$. In fact,
$ J \neq \emptyset$ since $0_{RG}^{*} =0_{RG}\in J$ and for $\gamma \in RG$ and $x,y \in J$,  we have  $x=\alpha^{*}$ and
$y=\beta^{*}$ for some $\alpha, \beta \in I$. Therefore,
$$
x-y = \alpha^{*}-\beta^{*}=\left(\alpha-\beta \right)^{*} \in J, \quad \text{ and also } \quad \gamma x = \gamma \alpha^{*}=(\alpha \gamma^{*})^{*} \in J,
$$
thus $J$ is a left ideal of $RG$. Moreover,
\begin{align*}
J^{*}=\left \{w^{*}:\hspace{0.1cm}w \in J  \right \}=\left \{(\alpha^{*})^{*}:\hspace{0.1cm}\alpha \in I  \right \} = I.
\end{align*}
Finally, let $\gamma\in RG$ and $\rho\in I$. By hypothesis $RG$ is left duo, then $J$ is a two-sided ideal and since $\rho^\ast\in J$,
$\gamma^\ast \in RG$, we have $\rho^\ast\gamma^\ast\in J$, hence
\begin{align*}
\gamma\rho = \left((\gamma\rho)^{\ast}\right)^{\ast} = \left(\rho^{*}\gamma^{*}\right)^{*} \in J^{*}=I.
\end{align*}
Therefore, $RG$ is a right duo ring. The proof of the other implication is similar.
\end{proof}

\begin{rem}
The ``Only if" part in the previous proof can be schematized as follows.
\begin{figure}[htb]
\begin{align*}
      \begin{matrix}
      \textup{$^*: RG$}  & \longrightarrow  & \textup{$RG$} & \longrightarrow & \textup{$RG$}  &\\
      \textup{$I$ (right)}  & \longmapsto  & \textup{$I^*$} \textup{(left)} &  & \textup{$I^{**}=I$}  &\\
               &             & \Big\Downarrow  \textup{(Hypothesis)}      &            & \Big\Uparrow \\
      &  & \textup{$I^*$ (right)}    &  \longmapsto  & \textup{$(I^*)^*$ (left)}
      \end{matrix}
\end{align*}
\caption{Proof's Summary of Proposition  \ref{prop1}.}
\label{fd}
\end{figure}
\end{rem}

Marks \cite{Marks:02} showed that if $R$ is a commutative ring with identity and $G$ is a finite group, then the group ring $RG$ is reversible if
and only if it has the SI property. Bell and Li in \cite{BeLi:07} asserted, without proving, that this result is also true for any group $G$. In the sequel,
we present a proof of this statement.

\begin{prop} \label{resi}
Let $R$ be a commutative ring with identity and let $G$ be any group. Then, the group ring $RG$ is reversible if and only if $RG$ has the SI property.
\end{prop}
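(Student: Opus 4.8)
The plan is to handle the two implications separately. The forward implication is immediate and uses nothing about the group‑ring structure: by Lemma~\ref{l3} every reversible ring has the SI property, so if $RG$ is reversible then $RG$ has the SI property.

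For the converse, suppose $RG$ has the SI property and fix $\alpha,\beta\in RG$ with $\alpha\beta=0$; the goal is $\beta\alpha=0$. The main tool I would introduce is the coefficient functional $\tau\colon RG\to R$ sending $\mu=\sum_{g\in G}\mu_g g$ to $\mu_1$, the coefficient of the identity $1\in G$. One first records two elementary facts, both obtained by reindexing a finite sum over supports and using that $R$ is commutative: (i) $\tau$ is trace‑like, i.e. $\tau(\mu\nu)=\tau(\nu\mu)$ for all $\mu,\nu\in RG$; and (ii) for every $g\in G$ the coefficient of $g$ in $\mu$ equals $\tau(\mu g^{-1})$ (equivalently $\tau(g^{-1}\mu)$). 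In particular, $\mu=0$ in $RG$ if and only if $\tau(\mu g^{-1})=0$ for all $g\in G$.

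Now I would combine (i)--(ii) with the SI hypothesis, checking $\beta\alpha=0$ coefficient by coefficient. Since $\alpha\beta=0$ and $RG$ has the SI property, $\alpha\gamma\beta=0$ for every $\gamma\in RG$; in particular, for each fixed $g\in G$ we take $\gamma=g^{-1}$ to get $\alpha g^{-1}\beta=0$. Then, by (ii), the coefficient of $g$ in $\beta\alpha$ equals $\tau\!\left((\beta\alpha)g^{-1}\right)=\tau\!\left(\beta\,(\alpha g^{-1})\right)$, which by (i) equals $\tau\!\left((\alpha g^{-1})\,\beta\right)=\tau(\alpha g^{-1}\beta)=\tau(0)=0$. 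As $g$ was arbitrary, every coefficient of $\beta\alpha$ vanishes, so $\beta\alpha=0$ and $RG$ is reversible.

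I do not expect a genuine obstacle here: the only points requiring care are the verification of properties (i) and (ii) of $\tau$ (where commutativity of $R$ enters) and the idea that reversibility should be tested coefficient‑wise, using the trace identity (i) precisely to rewrite the $g$‑coefficient $\tau((\beta\alpha)g^{-1})$ of $\beta\alpha$ as $\tau(\alpha g^{-1}\beta)$, which the SI property forces to be $0$. One could instead route the argument through the classical involution $*$ used in Proposition~\ref{prop1}, but working with $\tau$ keeps the group‑ring input transparent and avoids invoking anything about the structure of $G$ or the characteristic of $R$.
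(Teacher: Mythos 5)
Your proposal is correct and follows essentially the same route as the paper: the forward direction via Lemma~\ref{l3}, and the converse by introducing the trace map $\tau(\mu)=\mu_1$, using its symmetry $\tau(\mu\nu)=\tau(\nu\mu)$ (valid since $R$ is commutative) together with the SI property applied to $\gamma=g^{-1}$ to show every coefficient of $\beta\alpha$ vanishes. The only cosmetic difference is that you extract the $g$-coefficient as $\tau((\beta\alpha)g^{-1})$ while the paper evaluates $\tau(g^{-1}\beta\alpha)$ over the finitely many elements of the support of $\beta\alpha$; these are interchangeable via the trace identity.
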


\begin{proof}
By Lemma \ref{l3}, in general reversible implies the SI condition.

Conversely, suppose that $RG$ has the SI property and let $\alpha, \beta \in RG$ such that $\alpha\beta=0$. Defining the trace map $tr: RG \rightarrow R$
in the usual way, $\sum_{g \in G}a_gg\mapsto a_1$, we  have $tr(xy) = tr(yx)$ for all $x, y \in RG$. Since $RG$ satisfies the SI condition,
$\alpha\gamma\beta=0$ for all $\gamma\in RG$. Therefore, $0=tr(\alpha[\gamma\beta]) = tr([\gamma\beta]\alpha)$ and thus, every element of the left ideal
$(RG)\beta\alpha$ has trace zero. Now, if $\beta\alpha$ has the following representation
\begin{align*}
\beta\alpha=\sum_{i=1}^{n}a_ig_i, \textup{  ~$a_i \in R$, ~$g_i \in G$ \quad and \quad $g_i \neq g_j$ \quad for \quad $i \neq j$,}
\end{align*}
then when considering the elements $\gamma_i = g_i^{-1} \in RG$, $1 \leq i \leq n$, it follows from $tr(\gamma_1 \beta\alpha)=0$ that $a_1=0$. Similarly,
from $tr(\gamma_2 \beta\alpha)=0$, it is obtained that $a_2=0$ and continuing in this way, $a_i=0$, $1 \leq i \leq n$, i.e., $\beta\alpha =0$ and thus,
$RG$ is reversible.

\end{proof}

\section{Duo, reversible and symmetric group rings}

Let $R$ be a commutative ring and $G$ be a group. If $G$ is an abelian group, then clearly $RG$ is commutative and thus,
$RG$ is duo, reversible, symmetric, and has the SI property. So we are interested in the case when $G$ is non-abelian. For a
non-abelian torsion group $G$ and $\mathbf{F}$ a field, Gutan and Kisielewicz {{\cite[Theorem 2.1]{GuKis:04}}} showed that if the group algebra $\mathbf{F}G$
is reversible, then $G$ is a Hamiltonian group. Y. Li and M. M. Parmenter in {{\cite[Introduction]{LiPar:07}}} asserted, without showing,
that this result is also true when $\mathbf{F}=R$ is a commutative ring with identity.

\begin{theor} \label{trihg}
Let $G$ be a non-abelian torsion group and $R$ be a commutative ring with identity. If the group ring $RG$ is reversible, then $G$ is Hamiltonian.
\end{theor}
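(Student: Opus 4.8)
The plan is to show directly that every subgroup of $G$ is normal; together with the non-abelianness hypothesis this is exactly the assertion that $G$ is Hamiltonian. Since a subgroup is normal as soon as each cyclic subgroup it contains is normal, it suffices to prove $\langle h\rangle\trianglelefteq G$ for every $h\in G$. As $G$ is torsion, $h$ has finite order $n$, so the element $\widehat{h}:=1+h+\cdots+h^{n-1}\in RG$ is available, and from $h^{n}=1$ one gets at once the identity $\widehat{h}\,(1-h)=\widehat{h}-\widehat{h}h=0$.

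Now fix an arbitrary $g\in G$. Multiplying the previous identity on the right by $g$ gives $\widehat{h}\,(1-h)g=0$ in $RG$, so reversibility of $RG$, applied to the pair $\widehat{h}$ and $(1-h)g$, yields $(1-h)g\,\widehat{h}=0$, that is,
\[
g\widehat{h}=hg\widehat{h}.
\]
The key point is that right multiplication of $1-h$ by the group element $g$ is what injects information about conjugation by $g$, while reversibility converts the trivial left-annihilation $\widehat{h}(1-h)=0$ into this nontrivial relation.

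It remains to read off the conclusion by comparing supports. Expanding both sides, $g\widehat{h}=\sum_{i=0}^{n-1}gh^{i}$ and $hg\widehat{h}=\sum_{i=0}^{n-1}hgh^{i}$; on each side the listed $n$ group elements are pairwise distinct and appear with coefficient $1_{R}$, so (since $1_{R}\neq 0_{R}$) equality in the free $R$-module $RG$ forces $\{gh^{i}:0\le i<n\}=\{hgh^{i}:0\le i<n\}$. In particular $g$ belongs to the right-hand set, so $g=hgh^{i}$ for some $i$; hence $h^{-1}g=gh^{i}$ and therefore $g^{-1}hg=h^{-i}\in\langle h\rangle$. Since $g$ and $h$ were arbitrary, every cyclic subgroup, and therefore every subgroup, of $G$ is normal, and a non-abelian group with that property is Hamiltonian.

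I do not expect a serious obstacle here: the substantive move is spotting the correct pair of elements to feed into the reversibility hypothesis (the multiplier $g$ placed on the right of $1-h$), after which everything is elementary bookkeeping in $RG$ --- distinctness of the powers $h^{i}$ and the standard passage from ``all cyclic subgroups normal'' to ``all subgroups normal''.
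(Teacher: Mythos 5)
Your proof is correct and follows essentially the same route as the paper's: both factor $0=1-h^{n}$ through $\widehat{h}$ and $1-h$, insert the arbitrary element $g$, apply reversibility once, and then compare supports of two sums of $n$ distinct group elements to conclude that the conjugate of $h$ lies in $\langle h\rangle$. The only (immaterial) difference is that you place $g$ on the right of $1-h$ and reverse $\widehat{h}\cdot\left[(1-h)g\right]=0$, whereas the paper starts from $g(1-x)\widehat{x}=0$ and reverses to $\widehat{x}\,g(1-x)=0$.
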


\begin{proof}
We will show that for all $x \in G$, $\langle x \rangle$ is a normal subgroup of $G$, and thus, $G$ is a Hamiltonian group. When $x$ is the neutral
element, the result is clear. Let $x \in G \backslash \lbrace 1 \rbrace$. Since $G$ is a torsion group, $x$ has finite order. Let $n$ denote the order
of $x$ and let $g \in G$, then
\begin{align*}
   0=g(1-x^{n})=g(1-x)(1+x+x^{2}+\cdots+x^{n-1}),
\end{align*}
and since $RG$ is reversible,
\begin{align*}
0=(1+x+x^{2}+\cdots+x^{n-1})g(1-x)=(1+x+x^{2}+\cdots+x^{n-1})(g-gx)
\end{align*}
Multiplying by the inverse of $g$, we have
\begin{align}\label{eq:star}
   0&=(1+x+x^{2}+\cdots+x^{n-1})(1-gxg^{-1}).
\end{align}

Then, since the order of $g$ is $n$, the set $\lbrace 1, x,x^{2},\ldots,x^{n-1}\rbrace$ contains $n$ pairwise distinct elements of $G$ and from the expression \eqref{eq:star}, which is equivalent to $1+x +x^{2}+\cdots+x^{n-1}=gxg^{-1}+xgxg^{-1}+x^{2}gxg^{-1}+\cdots+x^{n-1}gxg^{-1}$, thus for $i$
there exists $j$ such that $x^{i}=x^{j}gxg^{-1}$, whence $gxg^{-1} \in \langle x \rangle$, then it follows that
$\langle x \rangle \trianglelefteq G$ for all $x \in G $.
\end{proof}

It is clear that if a ring $R$ with identity is symmetric, then it is also reversible. In fact, if $a, b\in R$ such that $ab=0$ and $R$ is symmetric,
then $1ab=1ba=ba=0$, and thus, $R$ is reversible. From the above and together with Theorem \ref{theor1}, Proposition \ref{prop1} Proposition \ref{resi}, and Theorem
\ref{trihg}, we obtain the following diagram on group rings $RG$ which summarizes the valid implications between  duo, reversible, symmetric, and SI
properties and allows us to conclude that a necessary condition for $RG$ to have these properties is $G$ to be Hamiltonian.

\begin{figure}[htb]
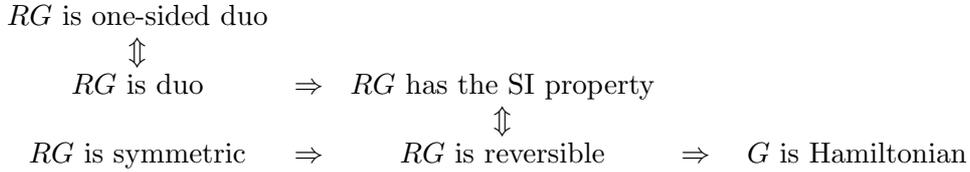

\begin{align*}
      \begin{matrix}
       \textup{$RG$ is one-sided duo}&  &     & &\\
       \Updownarrow & &   & &\\
       \textup{$RG$ is duo}& \Rightarrow & \textup{$RG$ has the SI property}    & &\\
       & & \Updownarrow    & &\\
      \textup{$RG$ is symmetric}  & \Rightarrow & \textup{$RG$ is reversible}& \Rightarrow & \textup{ $G$ is Hamiltonian}
      \end{matrix}
\end{align*}
\caption{Necessary condition over $RG$ and structure of $G$.}
\label{f2}
\end{figure}

It is well-known that every Hamiltonian group $G$ is of the form $G=Q_8 \times  E \times A$, where $Q_8=\langle x, y: x^4=1, x^2=y^2, y^{-1}xy=x^{-1}\rangle$
is the quaternion group of order $8$, $E$ is an elementary abelian $2$-group, and $A$ is an abelian group in which every element has odd order, see
\cite[Theorem 1.8.5]{PS:02}. Thus, by the previous theorem, if $G$ is a non-abelian torsion group and $R$ is a commutative ring with identity,
$RG=R(Q_8 \times E \times A)$ and since $R(Q_8 \times E \times A) \cong [R(E\times A)]Q_8$, the study of reversibility of $RG$ is essentially reduced to
the case when $G = Q_8$.

Li and Parmenter in \cite{LiPar:07} characterized when the group ring $\mathbb{Z}_nQ_8$ is a reversible ring. They proved the following theorem.

\begin{lem} [{{\cite[Theorem 2.5]{LiPar:07}}}]
Let $\mathbb{Z}_n$ be the ring of integers modulo $n$. The group ring $\mathbb{Z}_nQ_8$ is reversible if and only if $n=2$.
\end{lem}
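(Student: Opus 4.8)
The plan is to prove the two directions separately, the forward implication being the substantial one. For sufficiency ($n=2$), note that $\mathbb{Z}_2Q_8=\mathbf{F}_2Q_8$ is reversible by the result of Marks recalled in the Introduction \cite{Marks:02}, so nothing remains to be done. For necessity, suppose $\mathbb{Z}_nQ_8$ is reversible; I will deduce $n=2$. First I would record two elementary stability properties of reversibility that drive the whole argument: reversibility is inherited by unital subrings, and for a finite direct product $\prod_i A_i$ is reversible if and only if each $A_i$ is (in particular every direct factor cut out by a central idempotent inherits reversibility). Writing $n=\prod_i p_i^{a_i}$, the Chinese Remainder Theorem gives $\mathbb{Z}_nQ_8\cong\prod_i\mathbb{Z}_{p_i^{a_i}}Q_8$, so each factor $\mathbb{Z}_{p_i^{a_i}}Q_8$ must be reversible. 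It therefore suffices to decide, for a prime power $q=p^{a}$, exactly when $\mathbb{Z}_{q}Q_8$ is reversible, and to rule out everything except $q=2$.

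For an odd prime power $q=p^{a}$ the group order $|Q_8|=8$ is a unit in $\mathbb{Z}_q$, so the group ring is separable and its Wedderburn structure can be read off from the residue field. Over $\mathbf{F}_p$ the four linear characters of $Q_8/[Q_8,Q_8]\cong\mathbb{Z}_2\times\mathbb{Z}_2$ together with the unique two-dimensional irreducible representation give $\mathbf{F}_pQ_8\cong\mathbf{F}_p^{4}\times M_2(\mathbf{F}_p)$; the two-dimensional representation yields a genuine matrix block because a finite division ring is commutative (Wedderburn), so the associated quaternion algebra over $\mathbf{F}_p$ must split. Since the kernel of $\mathbb{Z}_q\to\mathbf{F}_p$ is nilpotent and $|Q_8|$ is invertible, the central idempotents and a full set of matrix units lift, yielding $\mathbb{Z}_qQ_8\cong\mathbb{Z}_q^{4}\times M_2(\mathbb{Z}_q)$. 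The factor $M_2(\mathbb{Z}_q)$ is not reversible: the pair $E_{12},E_{11}$ satisfies $E_{12}E_{11}=0\neq E_{11}E_{12}$, exactly as in Example \ref{ex1}. Hence $\mathbb{Z}_qQ_8$ is not reversible, and so no odd prime may divide $n$.

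It remains to treat $q=2^{a}$ and to show that reversibility forces $a=1$; this is the main obstacle, since here $8$ is not invertible, there is no Maschke--Wedderburn splitting, and neither the Chinese Remainder Theorem nor passage to a unital subring reduces $2^{a}$ (for $a\geq2$) to the solved case $q=2$. The plan is to exhibit, by a direct computation, an explicit pair $\alpha,\beta\in\mathbb{Z}_{2^{a}}Q_8$ with $\alpha\beta=0$ but $\beta\alpha\neq0$ (equivalently, by Proposition \ref{resi}, a failure of the SI condition). The search is organised around the observation that reversibility of $\mathbf{F}_2Q_8$ only constrains the leading term: if $\alpha\beta=0$ then $\beta\alpha\equiv0\pmod 2$, so $\beta\alpha\in 2\,\mathbb{Z}_{2^{a}}Q_8$, and the task is to arrange the $2$-torsion correction so that $\beta\alpha$ is nonzero while $\alpha\beta$ vanishes on the nose. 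Because the non-commutativity of $Q_8$ resides in the relation $xy=yx^{-1}$, one looks for $\alpha,\beta$ supported on the non-central part of $Q_8$, rather than on the central involution $z=x^{2}$, which renders every product symmetric; the genuinely hard point is to produce a zero product whose second-order term is not symmetric under reversal.

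Once such a pair is found for $\mathbb{Z}_4Q_8$, one verifies that the analogous pair survives modulo $2^{a}$ for every $a\geq2$, so that $\mathbb{Z}_{2^{a}}Q_8$ is reversible only when $a=1$. Combining the three cases, $n$ is divisible by no odd prime and not by $4$, whence $n\in\{1,2\}$; discarding the trivial ring $\mathbb{Z}_1=0$, we conclude that $\mathbb{Z}_nQ_8$ is reversible if and only if $n=2$.
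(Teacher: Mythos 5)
First, note that the paper does not prove this lemma at all: it is imported verbatim as \cite[Theorem 2.5]{LiPar:07}, so there is no in-paper argument to compare against. Judged on its own terms, your proposal has a genuine gap exactly where the theorem's content lies. The reduction via the Chinese Remainder Theorem and the treatment of odd prime powers (splitting of the quaternion algebra over $\mathbf{F}_p$ by Wedderburn's little theorem, lifting of idempotents and matrix units along the nilpotent kernel to get the non-reversible factor $M_2(\mathbb{Z}_{p^a})$) are sound in outline. But for $n=2^a$ with $a\geq 2$ you only announce ``the plan is to exhibit, by a direct computation, an explicit pair $\alpha,\beta$ with $\alpha\beta=0$ but $\beta\alpha\neq 0$'' and then describe where to look for it; no such pair is ever produced. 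Since $\mathbb{Z}_2$ is a quotient, not a subring, of $\mathbb{Z}_4$, and reversibility passes neither to nor from quotients, this case cannot be inherited from the field case, so the missing computation is the entire substance of the forward implication. A proof that ends with ``once such a pair is found'' is a proof strategy, not a proof.

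Two further points you should tighten if you carry out the plan. First, the passage from $\mathbb{Z}_4Q_8$ to $\mathbb{Z}_{2^a}Q_8$ for $a>2$ is again a quotient in the wrong direction; the clean way to make ``the analogous pair survives'' rigorous is to choose $\alpha,\beta$ with integer coefficients satisfying $\alpha\beta=0$ already in $\mathbb{Z}Q_8$ (such pairs exist: Bell and Li's Example 1.2, cited in the paper, shows $\mathbb{Z}Q_8$ is not reversible) and with $\beta\alpha\not\equiv 0\pmod 4$; then $\beta\alpha\neq 0$ modulo every $2^a$ with $a\geq 2$ because reduction modulo $4$ factors through reduction modulo $2^a$. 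Second, in the odd case it would be simpler, and would avoid the lifting machinery, to write down the central idempotent $e=\tfrac{1}{2}(1-x^2)$ directly in $\mathbb{Z}_qQ_8$ (legitimate since $2$ is a unit) and identify $e\,\mathbb{Z}_qQ_8$ with $M_2(\mathbb{Z}_q)$ explicitly; but that is a matter of economy, not correctness.
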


Note that if $RQ_8$ is reversible and $\car(R) = n > 0$, then the subring $\mathbb{Z}_nQ_8$ of $RQ_8$ is also reversible, and by the last
lemma, $n = 2$. Then, if $RQ_8$ is reversible, the only possibilities to the characteristic of $R$ are either $\car(R)=0$ or $\car(R)=2$. Assume that $RG$ is reversible  with $G$ a non-abelian torsion group, by Theorem \ref{trihg}, $G$ is Hamiltonian. Since $RG\cong [R(E \times A)]Q_8$ is reversible and  $\car(R)=\car(R(E \times A))$, then either $\car(R)=0$ or $\car(R)=2$. Therefore, for non-abelian torsion groups, we have the following result.

\begin{theor}
Let $R$ be a commutative ring with identity and $G$ be a non-abelian torsion group. If $RG$ is reversible, then either $\car(R)=0$ or $\car(R)=2$.
\end{theor}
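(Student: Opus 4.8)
The plan is to reduce the statement to the already–quoted fact that $\mathbb{Z}_nQ_8$ is reversible precisely when $n=2$, using that reversibility is inherited by subrings and that the characteristic is unaffected by the abelian part of a Hamiltonian group.

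First I would invoke Theorem \ref{trihg}: since $RG$ is reversible and $G$ is a non-abelian torsion group, $G$ is Hamiltonian, hence of the form $G=Q_8\times E\times A$ with $E$ an elementary abelian $2$-group and $A$ an abelian group of odd order. In particular $Q_8$ is a subgroup of $G$ (equivalently, one may write $RG\cong[R(E\times A)]Q_8$ with $\car(R)=\car(R(E\times A))$, but this is not strictly needed). Next, suppose toward the desired dichotomy that $\car(R)=n>0$. The canonical unital ring homomorphism $\mathbb{Z}\to R$, $1\mapsto 1_R$, has kernel $n\mathbb{Z}$, so its image is a subring of $R$ isomorphic to $\mathbb{Z}_n$; consequently $\mathbb{Z}_nQ_8$ embeds as a (unital) subring of $RG$. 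Since any subring of a reversible ring is reversible — if $ab=0$ in the subring then $ab=0$, hence $ba=0$, in the big ring — it follows that $\mathbb{Z}_nQ_8$ is reversible. Then the lemma of Li and Parmenter, \cite[Theorem 2.5]{LiPar:07}, forces $n=2$, so $\car(R)\in\{0,2\}$.

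The only points requiring care are bookkeeping rather than substance: that the map $\mathbb{Z}_n\hookrightarrow R$ is an embedding of unital rings (so that $\mathbb{Z}_nQ_8\hookrightarrow RG$ is a ring embedding, not merely an additive one), and the observation that one needs only $Q_8\le G$, which the structure theorem for Hamiltonian groups provides. There is no genuine obstacle here; the result is essentially a corollary of Theorem \ref{trihg} together with the $\mathbb{Z}_nQ_8$ lemma, and the proof is short.
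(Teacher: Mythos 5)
Your proposal is correct and takes essentially the same route as the paper: invoke Theorem \ref{trihg} to get $G$ Hamiltonian, locate a copy of $\mathbb{Z}_nQ_8$ inside $RG$ when $\car(R)=n>0$, and apply the Li--Parmenter lemma to force $n=2$. The only cosmetic difference is that the paper passes through the decomposition $RG\cong[R(E\times A)]Q_8$ and the equality $\car(R)=\car(R(E\times A))$, whereas you embed $\mathbb{Z}_nQ_8$ directly via $Q_8\le G$; both are sound.
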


Using the previous theorem and in a similar way to the construction of Figure \ref{f2}, we present the following diagram for group
rings $RG$ of non-abelian torsion groups over any commutative rings with identity, where now some necessary conditions for $RG$ to have
these properties are that the characteristic of $R$ is either $0$ or $2$.

\begin{figure}[htb]
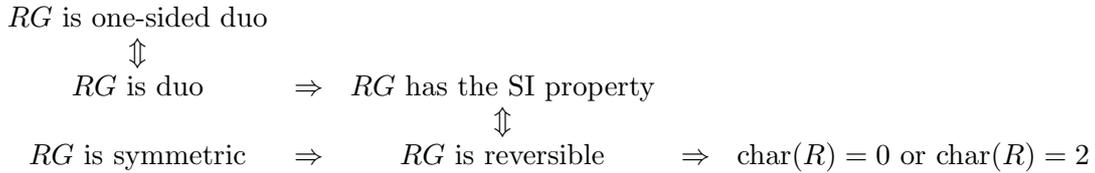

\begin{align*}
      \begin{matrix}
       \textup{$RG$ is one-sided duo}&  &     & &\\
       \Updownarrow & &   & &\\
       \textup{$RG$ is duo}& \Rightarrow & \textup{$RG$ has the SI property}    & &  \\
       & & \Updownarrow    &  &\\
      \textup{$RG$ is symmetric}  & \Rightarrow & \textup{$RG$ is reversible} & \Rightarrow &  \textup{$\car(R)=0$ or $\car(R)=2$}
      \end{matrix}
\end{align*}
\caption{Necessary condition over $RG$ and $\car(R)$.}
\label{f3}
\end{figure}

We close this section by presenting a summary of the different researches that have been developed around the ring-theoretic
properties of interest in this paper, in the setting of group rings.

Let $\mathbf{F}$ be a field and let $G$ be a non-abelian torsion group. According to Figure \ref{f3}, some necessary
conditions for $\mathbf{F}G$ to be a reversible ring are that $char(\mathbf{F})=0$ or $char(\mathbf{F})=2$. In
{{\cite[Theorem 5.1]{GuKis:04}}} Gutan and Kisielewicz gave a characterization of the reversibility of $\mathbf{F}G$ in terms
of these two options. Moreover, in {{\cite[Theorem 5.2]{GuKis:04}}} they described all finite reversible group rings. Li and
Parmenter investigated in \cite{LiPar:07} reversible group rings $RG$ of a torsion group $G$ over commutative rings, extending
the results of Gutan and Kisielewicz.

From Figure \ref{f2} it is known that if $G$ is a non-abelian
torsion group, duo implies reversible. However, the converse is not true, Bell and  Li in {{\cite[Example 1.2]{BeLi:07}}} showed
that the group ring $\mathbb{Z} Q_8$ is duo but is not reversible and using the results of Gutan and Kisielewicz, they established in
{{\cite[Theorem 4.2, Corollary 4.3]{BeLi:07}}} the equivalence between these two properties when $R=\mathbf{F}$ is a field and $G$
is a torsion group.

In order to extend the study of the duo property in group rings, Gao and Li in \cite{GaoLi:11} showed that if the group ring $RQ_8$
over an integral domain $R$ is duo, then $R$ is a field for the following cases: $(1)$ $\car(R) \neq 0$, $(2)$ $\car(R)=0$, and
$S \subseteq R \subseteq K_S$, where $S$ is a ring of algebraic integers and $K_S$ is its quotient field, and using this, they
described when the group ring $RG$ with $R$ an integral domain and $G$ a torsion group is duo, see {{\cite[Theorem 2.8, Theorem 2.9]{GaoLi:11}}}.

\section{Further Results}

Gao and Li in \cite{GaoLi:11} showed that if $R$ is an integral domain of $\car(R) = 0$, a necessary condition for $RQ_8$
to be duo is that $1+x^{2}+y^{2} \in \mathcal{U}(R)$ (the set of units of $R$) for all $x, y \in  R$. Moreover, they were
not aware of any example of an integral domain $R$ of $\car(R) = 0$ satisfying this necessary condition for which $RQ_8$
is not duo. Finally, they close their paper by proposing the following question:

\begin{quest} [{{\cite[Question 2.10]{GaoLi:11}}}]\label{q1}
Assume that $R$ is an integral domain of $\car(R) = 0$ such that $1+x^{2}+y^{2} \in \mathcal{U}(R)$ for all $x, y \in  R$. Is $RQ_8$ duo?
\end{quest}

We also do not know the answer to the previous question. However, modifying some hypotheses and in order to give continuity to the study of the relationships between the duo, reversible, symmetric and SI properties, in the setting of group rings, we present some results in this direction using the following lemma, whose proof is not hard.

\begin{lem} \label{l1}
Let $R_1, \ldots, R_k$ be rings such that $R=\bigoplus_{i=1}^{k}R_i$. Then, the ring $R$ is duo (resp. reversible, symmetric, has the SI property) if
and only if $R_i$ is duo (resp. reversible, symmetric, has the SI property), for $1 \leq i \leq k$.
\end{lem}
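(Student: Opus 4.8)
The plan is to separate the three ``element-wise'' conditions (reversible, symmetric, SI) from the ``ideal-theoretic'' one (duo), since the former follow at once from the fact that addition and multiplication in $R=\bigoplus_{i=1}^{k}R_i$ are carried out coordinate-wise: a product $(a_1,\dots,a_k)(b_1,\dots,b_k)=(a_1b_1,\dots,a_kb_k)$ vanishes if and only if $a_ib_i=0$ for every $i$, and likewise $aRb=\bigoplus_i a_iR_ib_i$.

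For reversible, symmetric and SI the ``only if'' direction is free: each $R_i$ is (isomorphic to) the subring $\{0\}\times\cdots\times R_i\times\cdots\times\{0\}$ of $R$, and all three properties pass to subrings, being expressed by universally quantified implications among equations in the ring elements. For the ``if'' direction, assuming each $R_i$ has the SI property and taking $a=(a_i)$, $b=(b_i)$ with $ab=0$, one gets $a_ib_i=0$, hence $a_iR_ib_i=\{0\}$, hence $aRb=\bigoplus_i a_iR_ib_i=\{0\}$; the arguments for $ab=0\Rightarrow ba=0$ and $abc=0\Rightarrow acb=0$ are the same, comparing both sides coordinate by coordinate.

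For the duo property, the ``only if'' direction is still elementary: if $I_i$ is a left ideal of $R_i$, then $\{0\}\times\cdots\times I_i\times\cdots\times\{0\}$ is a left ideal of $R$, and it is two-sided in $R$ precisely when $I_i$ is a right (hence two-sided) ideal of $R_i$; so $R$ left duo forces every $R_i$ left duo, and the same on the right, whence $R$ duo implies each $R_i$ duo. For the ``if'' direction I would invoke the central idempotents $e_i=(0,\dots,1,\dots,0)\in R$: any left ideal $I$ of $R$ satisfies $e_iI\subseteq I$ and decomposes as $I=\bigoplus_i e_iI$, with each $e_iI$ a left ideal of $R_i$. Then, for $r=(r_i)\in R$, $Ir=\bigoplus_i(e_iI)r_i$, so $Ir\subseteq I$ for all $r$ as soon as $(e_iI)r_i\subseteq e_iI$ for all $i$ and all $r_i\in R_i$; thus every left ideal of $R$ is two-sided once this holds in each $R_i$. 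Running the analogous argument for right ideals gives that each $R_i$ duo implies $R$ duo.

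The only step that is not purely formal is the decomposition $I=\bigoplus_i e_iI$ of a one-sided ideal of the direct sum, which uses the central idempotents $e_i$ and hence the presence of an identity. In every application of the lemma in this paper the summands $R_i$ are unital (they will be Wedderburn components, fields, or full matrix rings over division rings), so $R$ is unital as well and this causes no difficulty; accordingly the expected ``main obstacle'' is merely the bookkeeping of left versus right ideals in the duo case, and there is no real obstruction beyond it.
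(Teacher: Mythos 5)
The paper gives no proof of this lemma at all --- it is introduced with the phrase ``whose proof is not hard'' and left to the reader --- so there is nothing to compare your argument against; it simply supplies the missing details, and it does so correctly. Your split into the element-wise conditions (reversible, symmetric, SI), which transfer coordinate-by-coordinate because multiplication in $\bigoplus_i R_i$ is componentwise, and the ideal-theoretic condition (duo), handled via the central idempotents $e_i$ and the decomposition $I=\bigoplus_i e_iI$, is the standard and complete argument. Your closing caveat is also well taken: the decomposition of a one-sided ideal into its components genuinely uses the identity elements of the summands, and since the paper's blanket convention allows rings without identity, the duo half of the lemma as literally stated deserves that qualification; in every actual application in the paper the $R_i$ are Wedderburn components (division rings or matrix rings over them), so the hypothesis is satisfied and no harm is done.
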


\begin{theor}\label{t6}
Let $R$ be a semi-simple ring and $G$ be a finite group, such that $\vert G \vert \in \mathcal{U}(R)$. The following statements are equivalent.
\begin{multicols}{2}
\begin{enumerate}
      \item $RG$ is duo.
      \item $RG$ is symmetric.
      \item $RG$ is reversible.
      \item $RG$ has the SI property.
      \item $RG \cong \bigoplus_{i=1}^{k}D_i$,  where $D_i$ are division rings.
\end{enumerate}
\end{multicols}
\end{theor}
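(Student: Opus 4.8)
The plan is to route the whole statement through the Artin--Wedderburn structure theorem: once $RG$ is known to be semi-simple, its decomposition into matrix rings over division rings lets one decide each of the four ring-theoretic properties componentwise by invoking Example~\ref{ex1} and Lemma~\ref{l1}, and all five conditions collapse to the single requirement that every Wedderburn block have size $1$.

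First I would check that the hypotheses force $RG$ to be a semi-simple ring. Since $R$ is semi-simple and $G$ is finite, $RG$ is a finitely generated $R$-module, hence left and right Artinian; and because $|G|\in\mathcal{U}(R)$, the classical averaging idempotent $e=|G|^{-1}\sum_{g\in G}g$ makes Maschke's splitting argument go through over the base ring $R$, so every $RG$-submodule of an $RG$-module that splits over $R$ also splits over $RG$ --- and over a semi-simple $R$ every submodule splits. (Equivalently, one may first write $R\cong\bigoplus_i M_{m_i}(D_i)$, observe that $|G|$ is then invertible in each $D_i$, and combine $M_{m_i}(D_i)G\cong M_{m_i}(D_iG)$ with Maschke's theorem over a division ring.) Hence, by Artin--Wedderburn, there are division rings $\Delta_1,\dots,\Delta_k$ and integers $n_1,\dots,n_k\ge 1$ with
\[
RG\;\cong\;\bigoplus_{j=1}^{k}M_{n_j}(\Delta_j).
\]

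With this decomposition in hand, I would argue as follows. All four properties are invariant under ring isomorphism, so by Lemma~\ref{l1} the ring $RG$ is duo (respectively symmetric, reversible, has the SI property) if and only if every factor $M_{n_j}(\Delta_j)$ is. By Example~\ref{ex1} a full matrix ring over a division ring has any one of these properties precisely when its size is $1$, so the condition ``$RG$ is duo'' --- and likewise for symmetric, reversible and SI --- is equivalent to ``$n_j=1$ for all $j$'', i.e. to $RG\cong\bigoplus_{j=1}^{k}\Delta_j$, which is statement $(5)$. This yields $(1)\Leftrightarrow(5)$, $(2)\Leftrightarrow(5)$, $(3)\Leftrightarrow(5)$ and $(4)\Leftrightarrow(5)$ at once, hence the equivalence of all five statements.

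I expect the only delicate point to be the first step: one must be careful that it is the unit hypothesis $|G|\in\mathcal{U}(R)$ --- not merely a statement about characteristics --- that licenses Maschke's argument over the possibly non-commutative ring $R$ and delivers semisimplicity of $RG$. Everything after that is formal bookkeeping with Example~\ref{ex1} and Lemma~\ref{l1}.
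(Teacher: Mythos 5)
Your proposal follows essentially the same route as the paper's own proof: Maschke's theorem (using $|G|\in\mathcal{U}(R)$ and semisimplicity of $R$) to get $RG$ semi-simple, Wedderburn--Artin to decompose $RG$ into matrix rings over division rings, and then Example~\ref{ex1} together with Lemma~\ref{l1} to show each of the four properties forces every matrix block to have size $1$, which is exactly condition $(5)$. The extra care you take in justifying Maschke's argument over a non-commutative base ring is a welcome elaboration of a step the paper handles by citation, but the argument is the same.
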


\begin{proof}
We will show that $(1)\Leftrightarrow (5)$. The arguments to prove that $(2)\Leftrightarrow (5)$, $(3)\Leftrightarrow (5)$
and $(4)\Leftrightarrow (5)$ are similar.

Since $R$ is semi-simple, $G$ is finite and $\vert G \vert \in \mathcal{U}(R)$, by Maschke's theorem  \cite[Theorem 3.4.7]{PS:02},
the group ring $RG$ is semi-simple and thus, using the Wedderburn-Artin theorem {{\cite[Theorem 2.6.18]{PS:02}}}, we have that
\begin{align*}
      RG \cong \bigoplus_{i=1}^{k}M_{n_i}(D_i), \textup{ where $D_i$, $1\leq i\leq k$, is a division ring.}
\end{align*}

Suppose that $RG$ is a duo ring. Then, each $M_{n_i}(D_i)$ must also be duo, by Example \ref{ex1}, $n_i=1$, $1\leq i\leq k$, and thus $RG$ has the required decomposition.

Since the division rings are duo, the converse follows directly from Lemma \ref{l1}.
\end{proof}

Regarding Question \ref{q1}, by changing the hypotheses of the ring being an integral domain with characteristic zero for being
a semi-simple ring, from the previous theorem, we obtain the following corollary.

\begin{cor} \label{c1}
Let $R$ be a semi-simple ring, such that $1 + x^{2} + y^{2} \in \mathcal{U}(R)$ for all $x, y \in R$. Then, $RQ_8$ is a duo ring if and only
if $RQ_8$ is a finite direct sum of division rings, i.e.,
\begin{align*}
      RQ_8 \cong \bigoplus_{i=1}^{k}D_i, \textup{ where $D_i$, $1\leq i\leq k$, is a division ring.}
\end{align*}
\end{cor}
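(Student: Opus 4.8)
The plan is to obtain Corollary \ref{c1} as a direct specialization of Theorem \ref{t6} to the group $G = Q_8$. The only hypothesis of that theorem that is not immediately part of the present statement is $\vert Q_8 \vert \in \mathcal{U}(R)$, so the first (and essentially only) step is to extract this from the assumption on $R$.

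First I would evaluate the standing hypothesis $1 + x^{2} + y^{2} \in \mathcal{U}(R)$ at $x = 1$, $y = 0$, which gives $2 = 1 + 1^{2} + 0^{2} \in \mathcal{U}(R)$. Because the units of a ring form a multiplicative group, this yields $8 = 2^{3} \in \mathcal{U}(R)$, that is, $\vert Q_8 \vert \in \mathcal{U}(R)$. It is worth noting that only this single instance of the hypothesis is actually used; the stronger formulation ``for all $x, y \in R$'' is kept only so that the statement runs parallel to Question \ref{q1}.

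With this in hand, $R$ is semi-simple, $Q_8$ is finite, and $\vert Q_8 \vert \in \mathcal{U}(R)$, so Theorem \ref{t6} applies with $G = Q_8$. In particular the equivalence $(1) \Leftrightarrow (5)$ there states precisely that $RQ_8$ is duo if and only if $RQ_8 \cong \bigoplus_{i=1}^{k} D_i$ with each $D_i$ a division ring, which is the assertion of the corollary. If one prefers a self-contained argument, one simply repeats the proof of Theorem \ref{t6}: Maschke's theorem makes $RQ_8$ semi-simple, the Wedderburn--Artin theorem writes it as $\bigoplus_{i} M_{n_i}(D_i)$, Example \ref{ex1} forces every $n_i = 1$ once $RQ_8$ is duo, and Lemma \ref{l1} gives the converse.

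Since this is a genuine corollary, I do not expect a real obstacle. The one point that must not be skipped is the elementary observation that the hypothesis forces $2$, and hence $8$, to be invertible in $R$: this is exactly what makes Maschke's theorem available inside Theorem \ref{t6}. Its necessity is also visible from the fact that $R = \mathbf{F}_2$ fails the hypothesis, since there $1 + 1^{2} + 0^{2} = 0 \notin \mathcal{U}(R)$, consistently with $\mathbf{F}_2 Q_8$ not being a direct sum of division rings.
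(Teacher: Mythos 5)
Your proof is correct and matches the paper's own argument exactly: both extract $2 = 1+1^2+0^2 \in \mathcal{U}(R)$ from the hypothesis, conclude $8 = \vert Q_8\vert \in \mathcal{U}(R)$ (the paper passes through $4$ first, you use that units form a group), and then invoke Theorem \ref{t6}. No differences worth noting.
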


\begin{proof}
Note that $2=1+1^{2}+0^{2}$, then $2 \in \mathcal{U}(R)$. Since $4(2^{-1}2^{-1})=1$, it follows that $4 \in \mathcal{U}(R)$  and similarly,
$8 \in \mathcal{U}(R)$. Moreover, by hypothesis $R$ is semi-simple, then the result follows from Theorem \ref{t6}.
\end{proof}

\begin{examp}
Let $R= \left\{ \begin{pmatrix} r & s \\ 0 & r \end{pmatrix}: r, s \in \mathbb{Q} \right\}$. Given $x, y\in R$, it is not hard to check that
$\det\big(1+x^2+y^2\big)\neq 0$ and thus, $1 + x^{2} + y^{2} \in \mathcal{U}(R)$ for all $x, y \in R$. Following the arguments of the proof in
the last corollary, $|Q_8|\in \mathcal{U}(R)$. In the other hand, Y. Li and M. M. Parmenter in {{\cite[Example 4.3]{LiPar:07}}} showed that $RQ_8$ is
reversible. Furthermore, $R$ is a semi-simple ring, then by Theorem \ref{t6},
\begin{align*}
RQ_8 \cong \bigoplus_{i=1}^{k}D_i, \textup{ where $D_i$, $1\leq i\leq k$, is a division ring.}
\end{align*}

Therefore, $RQ_8$ is also duo, symmetric, and has SI property.
\end{examp}

It is possible to show that if the group ring $RG$ is duo, reversible, symmetric, or has the SI property, then $R$ must also respectively satisfy the property.
Next, we present a characterization of these properties for group rings $RG$ over semi-simple rings.

\begin{theor}\label{t7}
Let $R$ be a semi-simple ring, $G$ a group and $RG$ its group ring. Then, $RG$ is duo (resp. reversible, symmetric, has the SI property) if and only if
$R\cong \bigoplus_{i=1}^{k}D_i$ is a direct sum of division rings, and every group ring $D_iG$ is duo (resp. reversible, symmetric, has the SI
property), for $1 \leq i \leq k$.
\end{theor}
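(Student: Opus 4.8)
The plan is to exploit the Wedderburn-Artin decomposition of the semi-simple ring $R$ together with Lemma \ref{l1}. First I would note that since $R$ is semi-simple, the Wedderburn-Artin theorem gives $R\cong\bigoplus_{i=1}^{k}M_{n_i}(D_i)$ with each $D_i$ a division ring, and consequently $RG\cong\bigoplus_{i=1}^{k}M_{n_i}(D_i)G\cong\bigoplus_{i=1}^{k}M_{n_i}(D_iG)$, using the standard isomorphism $M_{n}(S)H\cong M_n(SH)$ for a ring $S$ and group $H$. By Lemma \ref{l1}, $RG$ is duo (resp.\ reversible, symmetric, has the SI property) if and only if each $M_{n_i}(D_iG)$ is duo (resp.\ reversible, symmetric, has the SI property).

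The crux is then to show that, for a division ring $D$, a group $H$ and an integer $n\ge 1$, the matrix ring $M_n(DH)$ has one of these four properties if and only if $n=1$ \emph{and} $DH$ has the corresponding property. The ``if'' direction with $n=1$ is immediate. For the ``only if'' direction, the key observation is that $DH$ is a ring with identity having no nontrivial idempotent-free obstruction to the argument of Example \ref{ex1}: for $n\ge 2$ the set of matrices supported on the first row is a right ideal of $M_n(DH)$ that is not a left ideal (so $M_n(DH)$ is not right duo, and symmetrically not left duo), and the standard matrix-unit computations $E_{12}E_{11}=0$ while $E_{11}E_{12}=E_{12}\ne 0$, resp.\ $E_{12}E_{21}E_{21}\ne 0$ with $E_{12}E_{21}E_{11}=0$ chased appropriately, show $M_n(DH)$ is neither reversible, symmetric, nor SI whenever $n\ge 2$. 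Here one uses only that $DH$ contains $1\ne 0$, so these matrix-unit relations survive; this is precisely the content already carried out in Example \ref{ex1} for $M_n(D)$, applied now with the coefficient ring $DH$ in place of $D$. Hence each summand $M_{n_i}(D_iG)$ has the property in question only if $n_i=1$ and $D_iG$ has it.

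Putting these together: if $RG$ has the property, then every $M_{n_i}(D_iG)$ does, forcing $n_i=1$ for all $i$, so $R\cong\bigoplus_{i=1}^{k}D_i$ is a direct sum of division rings and each $D_iG$ has the property. Conversely, if $R\cong\bigoplus_{i=1}^{k}D_i$ with each $D_iG$ having the property, then $RG\cong\bigoplus_{i=1}^{k}D_iG$ and Lemma \ref{l1} gives that $RG$ has it. The parenthetical remark preceding the statement (that $R$ inherits the property from $RG$) then follows a posteriori, since $R\cong R\cdot 1_G$ is a subring of $RG$, or directly from the displayed decomposition.

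The main obstacle I anticipate is purely bookkeeping: one must be careful that the isomorphism $M_{n_i}(D_i)G\cong M_{n_i}(D_iG)$ is applied correctly and that the matrix-unit obstructions in Example \ref{ex1} genuinely only require the coefficient ring to be a nonzero ring with identity (which $D_iG$ is), rather than a division ring; once that is granted the argument is a direct transcription of the proof of Theorem \ref{t6}, with $D_iG$ replacing the division ring $D_i$ and Lemma \ref{l1} invoked exactly as before.
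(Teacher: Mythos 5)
Your argument is correct, and it reaches the conclusion by a route that is organized differently from the paper's. The paper first passes the property from $RG$ down to $R$ (invoking the assertion, stated just before the theorem without proof, that $RG$ having the property forces $R$ to have it), then applies Wedderburn--Artin to the duo (resp.\ reversible, etc.) semi-simple ring $R$ to kill the matrix sizes via Example \ref{ex1}, and only then decomposes $RG\cong\bigoplus_i D_iG$ and applies Lemma \ref{l1}. You instead apply Wedderburn--Artin to $R$ unconditionally (legitimate, since semi-simplicity is a hypothesis), push the decomposition through the group ring via $M_{n}(D)G\cong M_{n}(DG)$, and run the matrix-unit obstructions of Example \ref{ex1} over the coefficient ring $D_iG$ rather than over $D_i$; as you note, those computations only need a nonzero ring with identity. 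What your version buys is independence from the unproved inheritance claim ``$RG$ has the property $\Rightarrow$ $R$ has the property'': everything is carried by Lemma \ref{l1} and the single extra isomorphism $M_n(S)G\cong M_n(SG)$. What the paper's version buys is that the matrix argument stays literally inside Example \ref{ex1} (matrices over a division ring) with no new isomorphism to verify. One small caution about your closing aside: for the duo property the ``$R$ is a subring of $RG$'' justification is not sound (duo does not pass to subrings in general --- the correct route is via the augmentation quotient or, as you also say, via the decomposition you have already displayed); this does not affect your main argument, which never uses that inheritance.
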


\begin{proof}
We will prove the duo case. The arguments in the other cases are similar.

Suppose that $RG$ is a duo ring. Then, $R$ must also be duo. Since $R$ is a semi-simple, it follows from Wedderburn-Artin theorem
{{\cite[Theorem 2.6.18]{PS:02}}}, that $R\cong \bigoplus_{i=1}^{k}M_{n_i}(D_i)$, where $D_i$ is a division ring, $1\leq i\leq k$.
By Lemma \ref{l1} every $M_{n_i}(D_i)$ is duo and thus, $n_i=1$. Then,
\begin{align*}
RG \cong \left( \bigoplus_{i=1}^{k}D_i \right) G\cong \bigoplus_{i=1}^{k}D_iG.
\end{align*}

Therefore, using the hypothesis and Lemma \ref{l1}, we have the result.

Conversely, if $R\cong \bigoplus_{i=1}^{k}D_i$ and every group ring $D_iG$ has duo property, we can write $RG \cong \bigoplus_{i=1}^{k}D_iG$
and again, it follows from Lemma \ref{l1} that $RG$ is also a duo ring.
\end{proof}

Finally, as a consequence of the Wedderburn-Artin theorem, we have: 1) If $R$ is a semi-simple ring which is commutative, then $R$ is isomorphic
to a finite direct sum of fields and 2) If $R$ is a finite-dimensional algebra over an algebraically closed field $\mathbf{F}$, which is semi-simple,
then  $R$ is isomorphic to a direct sum of the form $M_{n_1}(\mathbf{F})\oplus \cdots \oplus M_{n_k}(\mathbf{F})$
(see {{\cite[Exercises 3 and 8, p. 106]{PS:02}}}). Therefore, using similar arguments to the proof of the previous theorem, the following results
hold.

\begin{theor}\label{t8}
Let $R$ be a commutative semi-simple ring and $G$ a group. Then, the group ring $RG$ is duo (resp. reversible, symmetric, has the SI property) if and only if
$R \cong \bigoplus_{i=1}^{k}\mathbf{F}_i$ is a direct sum of fields, and each group ring $\mathbf{F}_iG$ is duo (resp. reversible, symmetric, has the SI property),
for $1 \leq i \leq k$. In particular, if $G$ is a non-abelian torsion group, then $G$ is Hamiltonian, and each $\mathbf{F}_i$ is of characteristic $0$ or $2$. 
\end{theor}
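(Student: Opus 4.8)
The plan is to reduce the whole statement to the already–treated case of a group ring over a field, by exploiting the structure theory of commutative semi-simple rings together with Lemma \ref{l1}, and then to read off the ``in particular'' clause from the diagram in Figure \ref{f2} and from Theorem \ref{trihg}.

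First I would invoke the fact recalled immediately before the statement: a commutative semi-simple ring is, by the Wedderburn--Artin theorem, a finite direct sum of fields, so $R\cong\bigoplus_{i=1}^{k}\mathbf{F}_i$ with each $\mathbf{F}_i$ a field. Distributing $G$ over the direct sum gives a ring isomorphism $RG\cong\bigoplus_{i=1}^{k}\mathbf{F}_iG$. Since each of the four conditions (duo, reversible, symmetric, having the SI property) passes to and from finite direct summands by Lemma \ref{l1}, it follows at once that $RG$ satisfies any one of them if and only if every $\mathbf{F}_iG$ does; this is exactly the asserted biconditional. I would point out that, in contrast to Theorem \ref{t7}, there is no need here to deduce ``$R$ is a direct sum of (division) rings'' from a ring-theoretic property of $RG$, because the commutativity and semi-simplicity of $R$ already force that decomposition into fields; this is what makes the present argument shorter.

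For the ``in particular'' clause I would assume that $G$ is a non-abelian torsion group and that $RG$ has one of the four properties; by the biconditional just proved, each $\mathbf{F}_iG$ has that same property. The key remark is that, for group rings over a commutative ring with identity, all four properties funnel into reversibility for the purpose of this argument: duo $\Rightarrow$ SI by Theorem \ref{theor1}, SI $\Leftrightarrow$ reversible by Proposition \ref{resi}, and symmetric $\Rightarrow$ reversible trivially (from $1\cdot ab=1\cdot ba$) --- precisely the implications recorded in Figure \ref{f2}. Hence each $\mathbf{F}_iG$ is reversible. Applying Theorem \ref{trihg} to one such $\mathbf{F}_iG$ gives that $G$ is Hamiltonian, and applying the theorem on the characteristic of a reversible group ring of a non-abelian torsion group over a commutative ring with identity (the one stated just before Figure \ref{f3}) to each $\mathbf{F}_iG$ gives $\car(\mathbf{F}_i)=0$ or $\car(\mathbf{F}_i)=2$, as required.

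I do not expect a genuine obstacle: the proof is a repackaging of Lemma \ref{l1} with the implications already assembled in Figure \ref{f2} and with Theorem \ref{trihg}. The only point deserving a line of care is to carry out the reduction to reversibility uniformly across all four properties, i.e.\ to note explicitly that ``duo $\Rightarrow$ reversible'', ``symmetric $\Rightarrow$ reversible'' and ``SI $\Rightarrow$ reversible'' all hold for the group rings $\mathbf{F}_iG$ at hand --- which is guaranteed by Theorem \ref{theor1}, Proposition \ref{resi}, and the elementary observation about symmetric rings with identity.
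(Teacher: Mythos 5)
Your argument is correct and is essentially the proof the paper intends: the paper gives no separate proof of Theorem \ref{t8}, merely remarking that it follows ``using similar arguments to the proof of the previous theorem'' together with the observation that a commutative semi-simple ring is a finite direct sum of fields, which is exactly your reduction via $RG\cong\bigoplus_{i=1}^{k}\mathbf{F}_iG$ and Lemma \ref{l1}. Your treatment of the ``in particular'' clause --- funnelling all four properties into reversibility via Figure \ref{f2} and then applying Theorem \ref{trihg} and the characteristic theorem to some $\mathbf{F}_iG$ --- is the natural completion of what the paper leaves implicit, and it is sound.
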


\begin{theor}\label{t9}
Let $R$ be a finite-dimensional algebra over an algebraically closed field $\mathbf{F}$, which is semi-simple. Then, $R$ has duo, reversible, symmetric or SI
property if and only if $R \cong \bigoplus_{i=1}^{k}\mathbf{F}$ is a direct sum of fields.
\end{theor}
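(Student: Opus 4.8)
The plan is to follow the same strategy used for Theorems \ref{t7} and \ref{t8}, exploiting the extra rigidity provided by the algebraically closed base field. First I would invoke the Wedderburn–Artin theorem together with the fact recorded just before the statement (namely the second of the two consequences quoted from \cite{PS:02}): since $R$ is a finite-dimensional semi-simple algebra over the algebraically closed field $\mathbf{F}$, there is an isomorphism $R \cong M_{n_1}(\mathbf{F})\oplus\cdots\oplus M_{n_k}(\mathbf{F})$ for some positive integers $n_1,\ldots,n_k$. This reduces the whole problem to understanding when such a direct sum of matrix algebras over $\mathbf{F}$ enjoys one of the four ring-theoretic properties.

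Next I would apply Lemma \ref{l1}: the ring $R$ is duo (resp. reversible, symmetric, has the SI property) if and only if each summand $M_{n_i}(\mathbf{F})$ is duo (resp. reversible, symmetric, has the SI property). Then Example \ref{ex1}, applied with the division ring $D=\mathbf{F}$, says that $M_{n_i}(\mathbf{F})$ has any one of these properties if and only if $n_i=1$. Combining these two observations, $R$ has the property in question if and only if $n_i=1$ for every $i$, that is, if and only if $R \cong \bigoplus_{i=1}^{k}\mathbf{F}$. For the converse direction there is essentially nothing further to do beyond citing the same ingredients in the other order: $\mathbf{F}$ is a field, hence by Example \ref{ex1} it is duo, reversible, symmetric and has the SI property, and by Lemma \ref{l1} the finite direct sum $\bigoplus_{i=1}^{k}\mathbf{F}$ inherits each of these properties.

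I do not anticipate a serious obstacle: the statement is a direct corollary of Wedderburn–Artin in the algebraically closed case, of Lemma \ref{l1}, and of Example \ref{ex1}, and the write-up will be almost verbatim the same bookkeeping as in the proofs of Theorems \ref{t7} and \ref{t8}. The only point needing a little care is to make sure the quoted form of Wedderburn–Artin is the one valid for finite-dimensional algebras over an algebraically closed field, so that every division algebra appearing in the decomposition is $\mathbf{F}$ itself; this is exactly where the hypothesis ``algebraically closed'' enters, and it is precisely the content of the cited exercise in \cite{PS:02}.
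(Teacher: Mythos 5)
Your proposal is correct and follows essentially the same route the paper intends: the paper explicitly defers the proof of Theorem \ref{t9} to ``similar arguments to the proof of the previous theorem,'' relying on the Wedderburn--Artin decomposition $R \cong M_{n_1}(\mathbf{F})\oplus\cdots\oplus M_{n_k}(\mathbf{F})$ over the algebraically closed field, Lemma \ref{l1}, and Example \ref{ex1} to force $n_i=1$. Your write-up simply makes that outline explicit, including the converse via the fact that fields enjoy all four properties.
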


\begin{examp}
Let $R= \mathbb{C} Q_8$, clearly $R$ is a finite-dimensional algebra over the algebraically closed field $\mathbb{C}$. Suppose that $R$ is a reversible ring,
then by Theorem \ref{t9}, $R \cong \bigoplus_{i=1}^{k} \mathbb{C}$, which is a contradiction since $R$ is not commutative. Therefore, $R$ is not reversible
and thus, from the Figure \ref{f2}, $R$ also does not have the duo, symmetric and SI properties.
\end{examp}

\begin{rem}
The previous example can also be deduced from Theorem 3.1 in  \cite{GuKis:04}.
\end{rem}

The following example shows a group ring over a commutative semi-simple ring which is reversible but not symmetric. This example was given by  Gutan and Kisielewicz in {{\cite[p. 291]{GuKis:04}}}, where they provided an example of a
reversible but not symmetric group ring over a ring of characteristic 0.

\begin{examp}
Let $R= \mathbf{F}_2 \oplus \mathbb{Q}$. It follows from Theorem 3.1 in  \cite{GuKis:04} that the rational group algebra $\mathbb{Q}Q_8$ is reversible and by {{\cite[Example 7]{Marks:02}}} we have that $\mathbf{F}_2 Q_8$ is also a reversible but not symmetric group ring. Therefore, by Theorem \ref{t8} $RQ_8$, is a  reversible but not symmetric group ring.
\end{examp}




\section*{Acknowledgements}

The first and second authors are members of the Research Group ALCOM (\'Algebra y Combinatoria) at Escuela de Matem\'aticas - Universidad Industrial de Santander. The third author is
a member of the Research Group ALTENUA (\'Algebra, Teor\'ia de N\'umeros y Aplicaciones: ERM) and was partially supported by Vicerrectoria de Investigaciones e Interacci\'on Social at Universidad de Nari\~no.

\bibliographystyle{plain}

\end{document}